\newtheorem{satz}{Theorem}
\newtheorem{theorem}[satz]{Theorem}
\newtheorem{lemma}[satz]{Lemma}
\newtheorem{definition}[satz]{Definition}
\newtheorem{corollary}[satz]{Corollary}
\newtheorem{claim}[satz]{Claim}
\newcommand{\qed}{{} \hfill \mbox{$\Box$}}
\def\({\big (}
\def\){\big )}
\def\_phi{\varphi}
\title{Sumsets of Semiconvex sets}
\author[1]{Imre Ruzsa}
\author[2]{Jozsef Solymosi}
\affil[1]{Alfr\'ed R\'enyi Institute of Mathematics, 
Budapest, Hungary}
\affil[2]{Department of Mathematics, 
University of British Columbia, 
Vancouver, BC, Canada}
\date{}                     
\begin{document}

\maketitle

\abstract{We investigate additive properties of sets $A,$ where $A=\{a_1,a_2,\ldots ,a_k\}$ is a monotone
increasing set of real numbers, and the differences of  consecutive
elements are all distinct. It is known  that $|A+B|\geq c|A||B|^{1/2}$ for any
finite set of numbers $B.$ The bound is tight up to the constant
multiplier. We give a new proof to this result using bounds on crossing numbers of geometric graphs. 
We construct examples showing the limits of possible improvements. In particular, we show that there are arbitrarily large
sets with different consecutive differences and sub-quadratic sumset size.}

\section{Introduction}
\label{sec:1}
Given two sets of numbers, $A$ and $B$, the {\em sumset}
of $A$ and $B$, denoted by $A+B,$ is
\[
A+B = \{a+b: a,b \in A \text{ and } b \in B\}
\]
Let $A=\{a_1,a_2,\ldots ,a_k\}$ be a finite set of real numbers
with the property that
\begin{equation}
a_i-a_{i-1}<a_{i+1}-a_i
\end{equation}
for any $1<i<k.$ Sets with this property are said to be {\em
convex} sets.

Improving on a result of Hegyv\'ari \cite{HE},  Elekes, Nathanson, and
Ruzsa \cite{ENR} proved that if $A$ is convex, then
$|A+B|\geq ck^{3/2}$ for any set $B$ with $|B|=k.$ A set, $A,$ has distinct consecutive differences if for
any $1\leq i,j\leq k,$ $a_{i+1}-a_i=a_{j+1}-a_j$ implies $i=j.$

The following theorem, which was proved in \cite{RSS}, generalizes the result of 
Elekes, Nathanson, and Ruzsa.

\begin{theorem}[\cite{RSS}]\label{main}
Let $A$ and $B$ be finite sets of real numbers with $|A| = k$
and $|B| = \ell.$ If $A$ has distinct consecutive differences,
then
\[
|A+B| \geq c{k\sqrt{\ell}}.
\]
In particular, if $k = \ell,$ then
\[
|A+B| \geq c{k^{3/2}},
\]
where $c>0$ is an absolute constant.
\end{theorem}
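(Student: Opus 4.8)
The plan is to realize the additive information geometrically and apply the crossing number inequality. First I would put $S:=A+B$ on the $x$-axis, so that $S$ is a set of $m:=|A+B|$ points, and build a geometric graph $G$ as follows: for every $b\in B$ and every consecutive pair $a_i,a_{i+1}$ of $A$, draw a circular arc in the upper half-plane whose endpoints are the two points $a_i+b$ and $a_{i+1}+b$ of $S$. Thus $G$ has $V=m$ vertices and $E=(k-1)\ell$ edges, and it is drawn in the plane with each edge a semicircle over its ``diameter interval'' $[a_i+b,\,a_{i+1}+b]$. The point of this encoding is that two such arcs cross \emph{exactly} when their diameter intervals interleave, i.e.\ when precisely one endpoint of one interval lies strictly inside the other.

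The proof then plays two estimates of the number of crossings against each other. On one side, assuming $E\ge 4V$, the crossing number inequality gives $\mathrm{cr}(G)\ge E^{3}/(64V^{2})\ge c\,(k\ell)^{3}/m^{2}$. On the other side I would bound the crossings from above by a direct counting argument. Each crossing between the arc of $(i,b)$ and the arc of $(i',b')$ produces an arc-endpoint (a value $a_{i'}+b'$ or $a_{i'+1}+b'$, which is an element of $S$) lying strictly inside the gap $(a_i+b,\,a_{i+1}+b)$. For a \emph{fixed} $b$, the gaps $(a_i+b,\,a_{i+1}+b)$, $i=1,\dots,k-1$, are pairwise disjoint, so each of the $2(k-1)\ell$ arc-endpoints falls into at most one of them; hence the number of endpoint-in-gap incidences with this fixed $b$ is at most $2(k-1)\ell$. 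Summing over the $\ell$ choices of $b$ yields $\mathrm{cr}(G)\le 2k\ell^{2}$. Combining the two bounds gives $c\,(k\ell)^{3}/m^{2}\le 2k\ell^{2}$, that is $m^{2}\ge c'\,k^{2}\ell$, which is exactly $|A+B|\ge c'' k\sqrt{\ell}$; the balanced case $k=\ell$ then reads $|A+B|\ge c''k^{3/2}$.

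The step I expect to be the real crux is \emph{justifying the crossing number inequality}, because that lemma requires $G$ to be a simple graph, and this is precisely where the hypothesis on $A$ is consumed. I would check that distinct consecutive differences force all edges of $G$ to be distinct: if $\{a_i+b,a_{i+1}+b\}=\{a_j+b',a_{j+1}+b'\}$, then matching endpoints gives $d_i:=a_{i+1}-a_i=a_{j+1}-a_j=:d_j$, whence $i=j$ and $b=b'$ (the reversed matching is impossible since all $d_i>0$). The contrast with an arithmetic progression is instructive and shows the hypothesis cannot be dropped: there every translate lands in the same progression, the arcs collapse to hugely repeated ``unit'' edges, $G$ becomes a multigraph, the crossing number inequality no longer applies, and indeed $|A+B|$ can be as small as $k+\ell-1$. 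Finally I would dispose of the easy regime $E<4V$ separately: there $m>(k-1)\ell/4$, which already dominates $k\sqrt{\ell}$ once $\ell$ is large, while for bounded $\ell$ the trivial bound $|A+B|\ge|A|=k$ suffices.
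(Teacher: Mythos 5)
Your proof is correct and follows essentially the same route as the paper: the identical geometric graph of semicircular arcs over $A+B$, with a crossing-number lower bound played against an $O(k\ell^2)$ upper bound on crossings (the paper counts at most $2k-1$ crossings per pair of translates of the path; your endpoint-in-gap incidence count gives the same bound). The only differences are cosmetic --- the paper invokes the convex (1-page) crossing bound $E^3/(27V^2)$ to get the constant $1/\sqrt{27}$ where you use the general crossing lemma, and you spell out the simplicity check and the sparse case $E<4V$ more explicitly.
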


Although the above theorem is tight, i.e. there are sets $A$ and $B$ such that the above bound is sharp up to the multiplicative constant (see in \cite{RSS}), several questions remain open.
Below we list six questions. In all of them $A$ denotes a set with distinct consecutive differences. Here, and later in the paper, we are using the asymptotic notation $f(x)\gg g(x)$ when there is a constant $c$ such that $f(x)\geq g(x)/\log^cf(x)$ holds as $x$ goes to infinity.
\begin{enumerate}
    \item What can we say about the structure of $A$ when there is a set $B,$ $|B|=|A|=k$ so that $|A+B| = O({k^{3/2}}) $?
    \item There is no restriction on $B$ in Theorem \ref{main}.  What happens if $B$ also has distinct consecutive differences?
    \item What is the minimum size of $A+A$?
    \item What is the best bound on $|A+B|$ if $A$ is convex?
    \item What is the best bound on $|A+A|$ if $A$ is convex?
       \item What is the minimum size of $A+B$ if $A$ and $B$ are convex?
    
\end{enumerate}

All questions above are open. In this paper we will consider the first three questions. 
The following list summarises the best known estimates. 

\begin{enumerate}
    \item If there is a set $B,$ $|B|=|A|=k$ so that $|A+B| = O({k^{3/2}}) $ then the sumset is evenly distributed, $E_{1.5}(A,B)\ll k^{9/4}.$ (in this paper)
    \item Even if both $A$ and $B$ have distinct consecutive differences, there are examples when $|B|=|A|=k$ and $|A+B| = c{k^{3/2}}. $ (in this paper)
    \item There are constructions for sets $A$ such that $|A+A|\leq |A|^{2-c}$ with some $c>0.1$ (in this paper) No better lower bound is known than what follows from Theorem \ref{main}.   
    \item What is the best bound on $|A+B|$ if $A$ is convex? No better lower bound is known than what follows from Theorem \ref{main}, and no construction is known showing $|A+B|\leq (|A||B|)^{1-c}$ with some $c>0.$
    \item If $A$ is convex then $|A+A|\gg |A|^{30/19}$ according to a recent result of Rudnev and Stevens in \cite{RS}, building on earlier results in \cite{LiRo,SchSh,Sh}.
    \item If $A$ and $B$ are convex and $|A|=|B|$ then $|A+B|\gg |A|^{30/19}$  (Ilya Shkredov \cite{ISh}). No construction is known showing $|A+B|\leq (|A||B|)^{1-c}$ with some $c>0.$
\end{enumerate}

There are interesting works relaxing and strengthening the notation of convex sequences, like in \cite{Sch}, \cite{HNR}, and \cite{SW}.

\medskip

\section{Lower bounds using crossing numbers}

\subsection{Proof of Theorem \ref{main}}
In this section we offer a new proof for Theorem \ref{main} by giving a bound in terms of additive energy. As the original proof in \cite{RSS}, this is also a simple proof but here we are using graph theory, the crossing number bound, which gives more information about the structure of $A.$ The variants of the crossing bounds we are going to use are all originate from the classical crossing bound by 
Ajtai, Chv\'atal, Newborn, and Szemer\'edi in \cite{ACNSz}.

\medskip
\begin{proof} ({\em of Theorem \ref{main}.}) For the given sets $A=\{a_1,a_2,\ldots ,a_k\}$ in which the consecutive differences are all distinct and an arbitrary set $B=\{b_1,\ldots,b_\ell\}$ we define a geometric graph, $G$. The vertices of the graph are points on the $x$ axis, the values of the sumset $A+B.$ Two vertices, $u,v\in \{A+B\}$ are connected by an edge, an upper semicircular arc,  iff there is an $a_i\in A$ and $b_j\in B$ such that $u=a_i+b_j$ and $v=a_{i+1}+b_j.$ So $G$ consists of translates of the path, $P,$ with vertex set $A$ where the consecutive vertices are connected by a semicircular arc (Fig.1). Since the consecutive differences are all different in $A,$ the graph has no multiple edges. We are going to bound the crossing number of this graph from below and from above to get a bound on $|A+B|.$ For the upper bound, note that any two translates of $P$ have at most $2k-1$ crossings, so the number of crossings in $G$ is at most 
\begin{equation}\label{lower_b}
cr(G)\leq \binom{|B|}{2}(2k-1)\leq |B|^2{k}.
\end{equation}

\begin{figure}[ht]\label{graph}
\centering
\includegraphics[scale=.4]{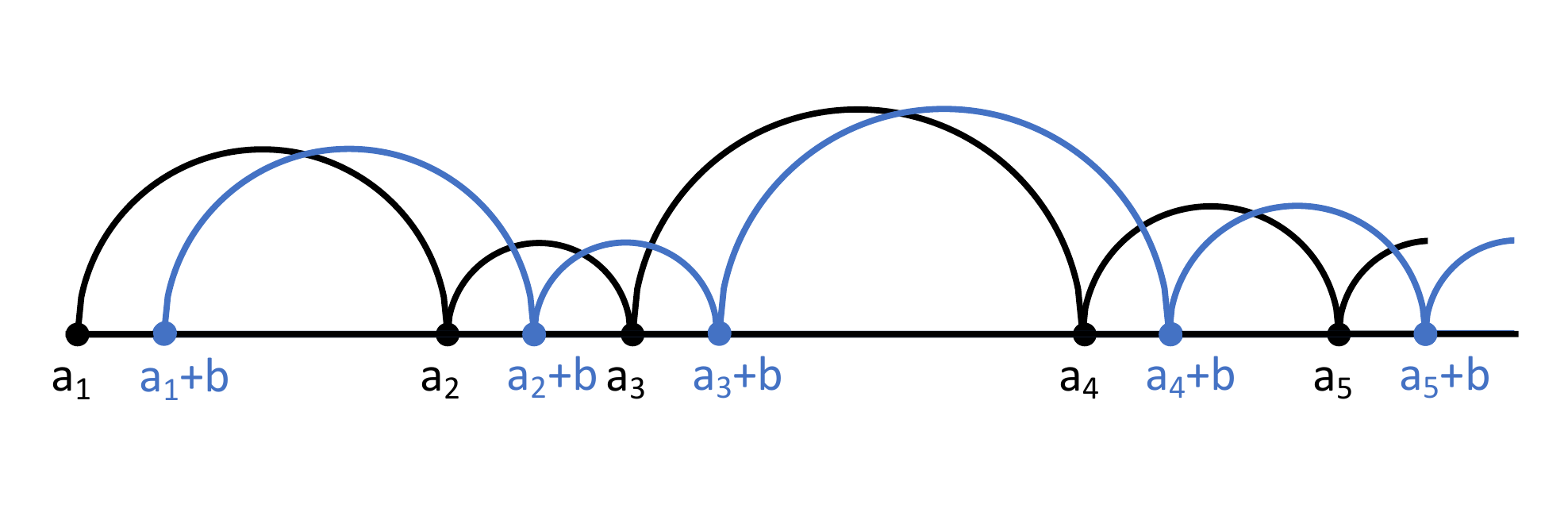}
\caption{The number of crossings between two translates of $A$ is at most $2k-1.$}
\end{figure}

There are various bounds on the crossing number of graphs. In our case this is the convex crossing number (which is the same as the 1-page crossing number) applies. The lower bound is a bit better than for general crossing numbers. It was proved in \cite{SSSZV1} that for $|B|(k-1)$ edges and $|A+B|$ vertices the number of crossings is at least
\begin{equation}\label{cross}
cr(G)\geq \frac{(|B|(k-1))^3}{27|A+B|^2},
\end{equation}

which proves Theorem \ref{main}, with the constant $c=1/\sqrt{27}\approx 0.19,$  since 
\begin{equation}\label{crossing}
  |B|^2{k} \geq \frac{(|B|(k-1))^3}{27|A+B|^2},
\end{equation}
implies that 

$$|A+B| \geq \frac{|A||B|^{1/2}}{\sqrt{27}}.$$

\qed
\end{proof}

\medskip
Our goal is not only to give another simple proof to this result, but to understand the structure of $A$ a bit better when the above bound is close to being tight. One would expect that uneven degree distributions might improve the bound. If in our graph, $G,$ a vertex $v\in \{A+B\}$ has degree $d$ that means that there are at least $d/2$ ways to write $v$ as $v=a+b,$ where $a\in A$ and $b\in B.$ Using the usual notation in additive combinatorics, $r_{A+B}(x)$ denotes the number of ways to write $x$ as $x=a+b,$ where $a\in A$ and $b\in B.$ The additive energy of the two sets is defined as 

$$E(A,B)=E_2(A,B)=\sum_{x\in\{A+B\}}r_{A+B}^{2}(x).$$

In a similar way, for any $\alpha>1$ one can define 

\[
E_\alpha(A,B)=\sum_{x\in\{A+B\}}r_{A+B}^{\alpha}(x).
\]

We are going to use a crossing number bound from \cite{PST} to get an estimate on $E_{1.5}(A,B)$ when $A$ has distinct consecutive differences. 

\begin{theorem}[Theorem 1 in \cite{PST}]
For any simple graph $G$ on $n$ vertices with vertex degrees $d_1\geq d_2\geq \ldots \geq d_n$ we have
\[
cr(G)\geq \frac{1}{36000n}\sum_{i=1}^nid_i^3-4.01n^2.
\]
\end{theorem}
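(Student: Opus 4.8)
The plan is to derive this degree-sorted estimate from the classical crossing lemma of \cite{ACNSz} by a random vertex-deletion argument carried out scale by scale along the degree sequence. I would first record the robust form of the crossing lemma: every simple graph $H$ with $N$ vertices and $M$ edges satisfies $cr(H)\ge aM^3/N^2-bN$ for absolute constants $a,b>0$, where the linear term absorbs the sparse range $M=O(N)$ in which only Euler's inequality $cr(H)\ge M-3N+6$ is available. The reason a bound weighted by the sorted degrees is stronger than the plain lemma is worth keeping in mind: the plain lemma only sees $M=\tfrac12\sum_v d(v)$, and by convexity of the cube $M^3/N^2\le\tfrac18\sum_v d(v)^3$, so it can certify only the \emph{smallest} value of $\sum_v d(v)^3$ consistent with $M$. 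Any gain from an uneven degree distribution must therefore be extracted by looking at the degrees directly, not through $M$.

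To do this I would split the vertices into dyadic degree classes and, for each threshold $t$, retain every vertex independently with one common probability $p_t$. Keeping $p_t$ constant on the sampled part is the decisive simplification: against a fixed optimal drawing of $G$, the expected number of inherited crossings is then exactly $p_t^4$ times a count of crossings, while Euler's inequality contributes the expected edge and vertex counts as $p_t^2$ and $p_t$ terms. Optimizing $p_t$ for the subgraph that carries the vertices of degree at least $2^t$ yields a contribution of order $2^{3t}n_{\ge t}^2/n$ to the crossing number, where $n_{\ge t}=|\{v:d(v)\ge 2^t\}|$, together with an additive loss of order $n$ from the Euler term. Abel summation turns $\sum_t 2^{3t}n_{\ge t}^2$ back into the sorted sum $\sum_{i} i\,d_i^3$: the cube of a degree is counted once for every vertex of at least that degree, and that multiplicity is exactly the rank $i$.

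The main obstacle, and the step I would spend the most care on, is combining the per-scale estimates without paying a logarithmic factor and without double counting, since the crossing lemma bounds a \emph{total} crossing number from below whereas I want to add up contributions from $O(\log n)$ scales. The way through is to charge each crossing to a single scale determined by the degrees of its endpoints, so that the scale-$t$ estimates bound disjoint portions of $cr(G)$ and therefore add directly to $cr(G)$ rather than to $(\log n)\,cr(G)$; verifying that the optimization at each scale uses only crossings charged to it is the delicate point. This accounting also explains the $-4.01n^2$ term: for the vertices of very large degree the optimal $p_t$ would exceed $1$ and must be truncated, at which scales the $p^4$ amplification is lost and those vertices are controlled by the Euler bound alone, so their cubes — at most $O(n)$ ranks, each contributing $O(n^3)$ before dividing by $n$ — are charged against an $O(n^2)$ error rather than the main term. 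I would keep the constants symbolic throughout and fix the numerical values of $a$, $b$, the $p_t$, and the dyadic rounding only at the end, where optimizing them produces the explicit $36000$ and $4.01$.
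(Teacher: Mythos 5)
First, a point of reference: the paper does not prove this statement at all --- it is quoted verbatim as Theorem~1 of \cite{PST} and used as a black box --- so your attempt has to be measured against the argument in \cite{PST} itself. That argument does follow the broad strategy you outline (dyadic decomposition along the degree sequence, a crossing-lemma estimate at each scale, a summation designed to avoid a logarithmic loss, and an $O(n^2)$ error absorbing the sparse scales); your Abel-summation identity $\sum_t 2^{3t}n_{\ge t}^2 \approx \sum_i i\,d_i^3$ and your accounting for the $-4.01n^2$ term are fine. But two of your steps fail as stated. The first is the sampling. The scale-$t$ subgraph has $e_t\approx 2^t n_{\ge t}$ edges but still lives on up to $n$ vertices, because the neighbours of the high-degree vertices need not themselves have high degree. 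Retaining every vertex with one common probability $p_t$ and applying the classical crossing lemma to the sample gives only $cr\ge a\,e_t^3/n^2 = a\,2^{3t}n_{\ge t}^3/n^2$, which falls short of your claimed per-scale contribution $2^{3t}n_{\ge t}^2/n$ by exactly the factor $n_{\ge t}/n$ --- i.e.\ the uniform sampling throws away precisely the imbalance the theorem is about. What is needed is a \emph{biased} sampling: keep the $n_{\ge t}$ high-degree vertices with probability $p$ and the remaining vertices with probability $q$, tuned so that $pn_{\ge t}\approx qn$; this produces $cr\ge a\,e_t^3/(n_{\ge t}\,n)$ and is exactly Lemma~2.1 of \cite{PST}, the bipartite bound $e^3/(108|U||V|)$ that this paper quotes separately in the proof of Theorem~4.

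The second gap is the combination of scales. Your threshold subgraphs are nested: an edge incident to a vertex of degree at least $2^t$ belongs to the scale-$t'$ subgraph for every $t'\le t$. The crossing lemma applied to the scale-$t$ subgraph bounds from below the \emph{total} number of crossings among all of its edges in the fixed drawing; there is no mechanism by which it can be made to count only those crossings you intend to charge to scale $t$, so the per-scale lower bounds for nested subgraphs cannot be added to obtain a lower bound on $cr(G)$. The repair in \cite{PST} is to replace the nested threshold graphs by a genuine \emph{partition} of the edge set into dyadic degree (or rank) classes of an endpoint, so that each edge lies in a bounded number of classes and each crossing is therefore counted a bounded number of times in the sum; the imbalanced crossing lemma is then applied to each class separately. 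With those two repairs --- the two-rate sampling lemma in place of uniform sampling, and an edge partition in place of the charging scheme --- your outline does become the proof in \cite{PST}.
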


Note that for any sequence of positive real numbers $d_1\geq d_2\geq \ldots \geq d_n$ we have the inequality

\[
\sum_{i=1}^nid_i^3\leq \left(\sum_{i=1}^nd_i^{3/2}\right)^2\leq (\ln{n}+1)\sum_{i=1}^nid_i^3,
\]

which leads us to 

\[
|B|^2k+4.01|A+B|^2\geq \frac{c_1}{|A+B|\ln{|A+B|}}\left(\sum_{x\in\{A+B\}}r_{A+B}^{3/2}(x)\right)^2.
\]
We use this bound in the case when $|B|=|A|=k.$   By Theorem \ref{main} and the inequality above
we get the following result.

\begin{theorem}
If $A$ is a set where the consecutive differences are distinct, and $B$ is an arbitrary set such that $|A|=|B|,$ then \footnote{We remind the reader that the $\gg$ notation hides a possible $\log^c$ multiplier as we defined it in the Introduction.}
\[
|A+B|\gg \left(\sum_{x\in\{A+B\}}r_{A+B}^{3/2}(x)\right)^{2/3}=E_{1.5}(A,B)^{2/3}.
\]
\end{theorem}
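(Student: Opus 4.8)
The plan is to convert the crossing-number inequality already set up in the excerpt into the energy estimate by balancing the two terms on its left-hand side. The starting point is the geometric graph $G$ from the proof of Theorem~\ref{main}, whose vertex set is $A+B$: a representation $v=a_i+b_j$ contributes the arc(s) to $a_{i+1}+b_j$ (if $i<k$) and from $a_{i-1}+b_j$ (if $i>1$), and because $A$ has distinct consecutive differences these arcs are all distinct. Hence the degree $d(v)$ satisfies $r_{A+B}(v)\le d(v)\le 2r_{A+B}(v)$, the lower bound being all I need: summing the pointwise bound $d(v)\ge r_{A+B}(v)$ gives $\sum_v d(v)^{3/2}\ge \sum_v r_{A+B}(v)^{3/2}=E_{1.5}(A,B)$.

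Feeding the degree sequence into the crossing bound of \cite{PST}, combining with the elementary upper bound $cr(G)\le |B|^2k$ from \eqref{lower_b}, and applying the rearrangement inequality $\sum_i i d_i^3\ge(\ln n+1)^{-1}\big(\sum_i d_i^{3/2}\big)^2$ reproduces the displayed inequality
\[
|B|^2k+4.01|A+B|^2\ge \frac{c_1}{|A+B|\ln|A+B|}\,E_{1.5}(A,B)^2 .
\]
The decisive step is to show that the first term $|B|^2k$ is absorbed by $|A+B|^2$, and this is the only place the hypothesis $|A|=|B|=k$ enters. Invoking Theorem~\ref{main} with $\ell=k$ gives $|A+B|\ge c\,k^{3/2}$, so $k\le c^{-2/3}|A+B|^{2/3}$ and therefore $|B|^2k=k^3\le c^{-2}|A+B|^2$. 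Thus the whole left-hand side is $O(|A+B|^2)$, and substituting back yields $|A+B|^3\gg E_{1.5}(A,B)^2$ after absorbing the $\ln|A+B|$ into the $\gg$ notation; taking cube roots gives $|A+B|\gg E_{1.5}(A,B)^{2/3}$.

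I expect no serious analytic obstacle here, since \cite{PST} and the rearrangement inequality do the heavy lifting; the points requiring care are purely organizational. First, one must use the degree–representation correspondence in the correct direction ($d(v)\ge r_{A+B}(v)$, so that the energy appears as a genuine lower bound), tolerating the harmless factor-$2$ losses at the endpoints $a_1,a_k$. Second, one must check that the logarithmic factors — the $\ln|A+B|$ from the rearrangement inequality and the cube root at the end — are exactly of the form the paper's $\gg$ convention is designed to swallow. The genuinely load-bearing idea is the absorption $k^3\lesssim|A+B|^2$, which works precisely because the balanced case $|A|=|B|$ together with Theorem~\ref{main} already forces $|A+B|\gtrsim k^{3/2}$.
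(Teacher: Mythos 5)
Your proposal is correct and follows the same route as the paper: the degree sequence of the geometric graph $G$ is fed into the Pach--Solymosi--Tardos crossing bound, the ordered-sum inequality converts $\sum_i i d_i^3$ into $\bigl(\sum_i d_i^{3/2}\bigr)^2$ at the cost of a $\ln n$ factor, and Theorem \ref{main} is invoked exactly as you describe to absorb the $|B|^2k=k^3$ term into $O(|A+B|^2)$. The paper leaves that last absorption step implicit (``By Theorem \ref{main} and the inequality above we get the following result''), and your write-up supplies precisely the intended details, including the correct direction $d(v)\ge r_{A+B}(v)$ and the handling of the logarithmic loss by the $\gg$ convention.
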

This bound - up to the log factor - is an improved version of Theorem \ref{main}. By Jensen's inequality the right hand side is minimal if $r_{A+B}(x)$ is the average, $k^2/|A+B|,$ for all $x\in\{A+B\},$ and then $|A+B|\approx k^{3/2}.$

\medskip

There is another, more direct way to measure the smoothness of the degree sequence, to count the number of $x\in \{A+B\}$ which have high multiplicity, i.e. there are many ways to express $x$ as a sum. For this, let us choose a set $S\subset \{A+B\}$ which is a collection of such elements.

\begin{theorem}\label{delta}
Suppose that $A$ is a set with consecutive differences being all distinct, and $B$ is an arbitrary set. Let  $S\subset \{A+B\}$ a set such that 
\begin{equation}\label{core}
\sum_{x\in S}r_{A+B}(x)\geq \frac{|A||B|}{\Delta}.
\end{equation}
Then there is a constant, $c_\Delta>1/(2\Delta)^3,$ so the following inequality holds
\begin{equation}\label{delta_2}
|A+B|\geq \frac{c_\Delta|B||A|^2}{|S|}.
\end{equation}
\end{theorem}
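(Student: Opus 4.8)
The plan is to reuse the geometric graph $G$ from the proof of Theorem~\ref{main}: its vertices are the points of $A+B$ on the $x$-axis, and its edges are the upper semicircular arcs joining $a_i+b_j$ to $a_{i+1}+b_j$. Two facts carry over unchanged. First, by \eqref{lower_b} we still have $cr(G)\le |B|^2|A|$. Second, for every vertex $v$ the degree $\deg(v)$ equals, up to the two boundary indices $i\in\{1,|A|\}$, twice the number of representations $v=a_i+b_j$, so that $2r_{A+B}(v)-2\le \deg(v)\le 2r_{A+B}(v)$. Summing this dictionary over the core turns hypothesis \eqref{core} into a graph statement: the vertices of $S$ carry at least $2|A||B|/\Delta-2|S|$ edge-endpoints, so the number $e_S$ of edges of $G$ incident to $S$ satisfies $e_S\ge |A||B|/\Delta-|S|$. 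I may assume $|S|\le |A||B|/(2\Delta)$, since otherwise the right-hand side of \eqref{delta_2} is at most $|A|/(4\Delta^2)\le|A|\le|A+B|$ and the conclusion is trivial; under this reduction $e_S\ge |A||B|/(2\Delta)$.

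First I would pass to the subgraph $H$ formed by those $e_S$ edges, so that $cr(H)\le cr(G)\le |B|^2|A|$. The content of the theorem is that these $\gtrsim |A||B|/\Delta$ edges are all anchored at the $|S|$ popular vertices, and this concentration has to be paid for in crossings. Concretely, I would view $H$ as a bipartite-type configuration between the $|S|$ core vertices and the $|A+B|$ vertices of the whole ground set, and apply a convex crossing estimate of the shape
\[
cr(H)\ \gtrsim\ \frac{e_S^{\,3}}{|S|\,|A+B|}.
\]
Feeding in $e_S\ge |A||B|/(2\Delta)$ together with $cr(H)\le |B|^2|A|$ and solving for $|A+B|$ then gives
\[
|A+B|\ \gtrsim\ \frac{e_S^{\,3}}{|S|\,|B|^2|A|}\ \ge\ \frac{(|A||B|)^3/(2\Delta)^3}{|S|\,|B|^2|A|}\ =\ \frac{1}{(2\Delta)^3}\cdot\frac{|B||A|^2}{|S|},
\]
which is precisely \eqref{delta_2}. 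Here the factor $(2\Delta)^3$ is assembled from the $1/2$ in the edge count $e_S\ge|A||B|/(2\Delta)$ (itself coming from the degree-to-representation dictionary) cubed against the absolute constant in the crossing estimate, and tracking these carefully is what produces the stated $c_\Delta>1/(2\Delta)^3$.

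The hard part will be justifying the crossing inequality with $|S|$, rather than $|A+B|$, in the denominator. The plain convex crossing lemma \eqref{cross} applied to $H$ only sees the edge count and the total number of vertices, and so merely reproduces Theorem~\ref{main} without any gain from $|S|$; indeed a star shows that, in general, edges anchored at a small set force no crossings at all, so some extra structure is indispensable. This is exactly where the distinct-consecutive-differences hypothesis must enter: since the arc lengths $a_{i+1}-a_i$ are pairwise distinct, a given core vertex meets any single translate in a controlled number of arcs and the arc system of $H$ behaves like an incidence (essentially $K_{2,2}$-free) configuration, which is what excludes the star-like degeneracies and upgrades the vertex factor from $|A+B|^2$ to $|S|\,|A+B|$. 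I would therefore devote the main effort to establishing this concentrated crossing bound — bounding how many arcs of two distinct translates can cross near the core, and checking that the density regime needed to run the bipartite estimate is in force (falling back on Theorem~\ref{main} directly when it is not) — after which the arithmetic of the previous paragraph closes the argument. \qed
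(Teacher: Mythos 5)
Your outline has the right skeleton — the graph $G$ from Theorem \ref{main}, the upper bound $cr(G)\le|A||B|^2$, the translation of \eqref{core} into ``$\gtrsim|A||B|/\Delta$ edges touch $S$'', and the target crossing inequality $cr\gtrsim e_S^3/(|S|\,|A+B|)$ — but the proof stops exactly at its central step: you state that the ``concentrated crossing bound'' is the hard part and defer it, so as written there is a genuine gap. Worse, the direction you propose for filling it is a misdiagnosis. No arithmetic input from the distinct-consecutive-differences hypothesis is needed here beyond what Theorem \ref{main} already used (simplicity of $G$ and the bound $cr(G)\le|A||B|^2$); the inequality with $|S|\,|A+B|$ in the denominator is a purely graph-theoretic fact. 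The paper splits the edges meeting $S$ into two classes and treats each with an off-the-shelf bound: (i) for the edges with \emph{both} endpoints in $S$, apply the convex crossing bound \eqref{cross} to the induced subgraph $H_S$, which lives on only $|S|$ vertices, so the denominator is $|S|^2\le|S|\,|A+B|$ for free; (ii) for the edges from $S$ to its complement, apply the bipartite crossing lemma (Lemma 2.1 of \cite{PST}) with $V=S$ and $U=\{A+B\}\setminus S$, which directly gives $cr\ge e^3/(108|U||V|)\ge e^3/(108|S|\,|A+B|)$. One then chooses the split parameter $\delta$ to balance the two constants, yielding $c_\Delta>1/(2\Delta)^3$. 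Your single ``bipartite-type configuration between $S$ and all of $A+B$'' conflates these two classes (edges inside $S$ are not bipartite between $S$ and its complement), which is why you could not locate a single lemma doing the job.

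Your star objection, which led you to believe extra incidence structure ($K_{2,2}$-freeness of the arc system) is indispensable, is resolved by the hypotheses of the bipartite lemma rather than by arithmetic: a star-like configuration anchored at few vertices has $e\le|U||V|$ far below the threshold $e\ge 6\max(|U|,|V|)$, so the lemma simply does not claim any crossings there, and that sparse regime is handled separately (the paper assumes $|A||B|^2\ge 6|A+B|$ in case (ii); your fallback to Theorem \ref{main} plays the same role). So the fix is not to prove a new incidence-type crossing estimate but to import Lemma 2.1 of \cite{PST} and add the two-case decomposition; with that, your bookkeeping of the constant (the factor $2$ from the degree-to-representation dictionary cubed against the absolute constants of the two crossing bounds) goes through essentially as you sketched it.
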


As in the previous theorem, this result implies Theorem \ref{main}. Setting $S=\{A+B\}$ gives $|A+B|=\Omega\left(|B|^{1/2}|A|\right),$ as in Theorem \ref{main}.
At the other end, if $|S|\approx |A|,$ then the sumset is as large as possible, it is $\Omega(|A||B|).$

\begin{proof}
We will work with $G$ as defined in the proof of Theorem \ref{main} and we are going to use a simple crossing bound from \cite{PST}.
\begin{lemma}[Lemma 2.1 in \cite{PST}]
Let $G(U,V)$ be a bipartite graph with vertex classes $U$ and $V,$ and
suppose that its number of edges satisfies $e \geq 6 \max(|U|,|V|).$ Then we have
\[
cr(G(U,V))\geq \frac{e^3}{108|U||V|}.
\]
\end{lemma}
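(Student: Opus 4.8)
The plan is to prove this bipartite bound by the probabilistic amplification method behind the crossing lemma, but with the two vertex classes thinned at \emph{different} rates. The base ingredient I would record first is the bipartite Euler bound: a simple bipartite planar graph on $v\geq 3$ vertices has at most $2v-4$ edges, because it is triangle-free and so every face is bounded by at least four edges. Deleting one edge per crossing from any drawing then gives $cr(H)\geq e_H-(2v-4)$ for a bipartite graph $H$ with $e_H$ edges, and I would adopt the slightly weaker uniform form
\[
cr(H)\geq e_H-2v ,
\]
which one checks directly to hold for \emph{all} bipartite $H$, including the small cases $v<3$. This lets the later expectation step proceed with no case analysis.

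Next I would fix a good optimal drawing of $G$ and pass to a random induced subgraph $G'$, keeping each vertex of $U$ independently with probability $p$ and each vertex of $V$ independently with probability $q$, the values of $p,q$ to be chosen at the end. Since every edge joins the two classes, the expected counts are $\mathbb{E}[v(G')]=p|U|+q|V|$ and $\mathbb{E}[e(G')]=pq\,e$. For crossings I would use that in a good drawing two crossing edges meet four \emph{distinct} vertices, necessarily two in $U$ and two in $V$, so each of the $cr(G)$ crossings of the fixed drawing survives with probability $p^2q^2$; hence $\mathbb{E}[cr(G')]\leq p^2q^2\,cr(G)$. Feeding these into the base inequality $cr(G')\geq e(G')-2v(G')$ and taking expectations produces the master inequality
\[
p^2q^2\,cr(G)\geq pq\,e-2p|U|-2q|V| .
\]

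The final and most delicate step is the optimization. Factoring out $pq$, the bound reads $cr(G)\geq \tfrac{1}{pq}\bigl(e-2|U|/q-2|V|/p\bigr)$, and maximizing the right-hand side I expect the stationary point at $p=6|V|/e$ and $q=6|U|/e$; there the two correction terms both equal $e/3$, the bracket equals $e/3$, and $1/(pq)=e^2/(36|U||V|)$, which multiply to the claimed $cr(G)\geq e^3/(108|U||V|)$. The point I expect to be the crux — and where the hypothesis enters — is feasibility: these probabilities lie in $(0,1]$ exactly when $e\geq 6|V|$ and $e\geq 6|U|$, that is when $e\geq 6\max(|U|,|V|)$, which is precisely the assumption of the lemma. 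So the role of the edge condition is not cosmetic; it is exactly the requirement that the optimal unbalanced sampling be admissible, and it is the reason the bound is stated with that threshold rather than for all $e$.
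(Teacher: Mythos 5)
Your proposal is correct and is essentially the original argument: the paper does not prove this lemma itself but quotes it as Lemma 2.1 of the cited Pach--Solymosi--Tardos paper, whose proof is exactly your two-rate sampling scheme --- the bipartite Euler-type base bound $cr(H)\geq e_H-2v$, independent retention of the classes $U$ and $V$ at different probabilities, and the choice $p=6|V|/e$, $q=6|U|/e$, for which the hypothesis $e\geq 6\max(|U|,|V|)$ is precisely the feasibility condition $p,q\leq 1$. Your two side checks (the degenerate cases $v<3$ in the base bound, and the fact that in a good drawing a crossing involves four distinct vertices, two in each class, so it survives with probability $p^2q^2$) are exactly the points that need verifying, so nothing is missing.
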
\label{bicross}
From the proof of Theorem \ref{main} we know that $cr(G)\leq |A||B|^2.$  There are two simple cases to consider; 
when a $\delta$-fraction of the edges with an endpoint in $S$ are inside of $S,$ 
or the $(1-\delta)$-fraction of them connect $S$ with the outside of $S.$ (We are going to optimize for $0<\delta<1$ at the end of our calculation)
For the first case we apply a classical crossing bound for the induced subgraph of $G$ on $S,$ denoted by $H_S.$ By inequality (\ref{cross}) we have
\[
|A||B|^2\geq cr(H_S)\geq \frac{(\delta|A||B|/\Delta)^3}{27|S|^2}\geq \frac{(\delta|A||B|/\Delta)^3}{27|A+B||S|},
\]
which gives the required inequality
\[
|A+B|\geq \left(\frac{\delta}{3\Delta}\right)^3\frac{|B||A|^2}{|S|}.
\]
In the second case we can assume that $|A||B|^2\geq 6|A+B|,$ so Lemma \ref{bicross} is applicable with $V=S$ and $U=\{A+B\}\setminus S.$
\[
|A||B|^2\geq \frac{e^3}{108|U||V|}> \frac{((1-\delta)|A||B|/\Delta)^3}{125|S||A+B|},
\]
which gives 
\[
|A+B|> \left(\frac{1-\delta}{5\Delta}\right)^3\frac{|B||A|^2}{|S|}.
\]

Choosing $\delta$ to make the constant multipliers in the two cases equal, we see that one can set $c_\Delta>1/(2\Delta)^3$ in Theorem \ref{delta}. 

\qed
\end{proof}

\medskip

One can be more specific in defining $S,$ we can bound the number of $x\in A+B$ such that $r_{A+B}(x)\geq t.$

\begin{corollary}
Suppose that $A$ is a set with consecutive differences being all distinct, and $B$ is an arbitrary set. Let $t>1$ an arbitrary integer and $S_t\subset \{A+B\}$ a set such that 
\begin{equation}
\min_{x\in S_t}\{r_{A+B}(x)\}\geq t.
\end{equation}
Then 

\[
|S_t|<\frac{3|A+B|^{1/2}|A|^{1/2}|B|}{t^{3/2}}.
\]

\end{corollary}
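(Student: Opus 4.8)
The plan is to apply Theorem \ref{delta} directly to the set $S = S_t$, choosing the parameter $\Delta$ so that the hypothesis \eqref{core} is exactly saturated. First I would record the trivial but crucial lower bound on the total weight carried by $S_t$: since every $x \in S_t$ satisfies $r_{A+B}(x) \geq t$, we have $\sum_{x \in S_t} r_{A+B}(x) \geq t|S_t|$. I would then set $\Delta = |A||B|/(t|S_t|)$, so that $|A||B|/\Delta = t|S_t| \leq \sum_{x\in S_t} r_{A+B}(x)$ and hypothesis \eqref{core} holds for this choice. Before invoking the theorem one should check that this $\Delta$ is admissible, i.e. $\Delta \geq 1$: this follows because $\sum_{x \in A+B} r_{A+B}(x) = |A||B|$ counts all pairs, so $t|S_t| \leq \sum_{x\in S_t} r_{A+B}(x) \leq |A||B|$.

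With $\Delta$ so chosen, Theorem \ref{delta} yields $|A+B| \geq c_\Delta |B||A|^2/|S_t|$ with $c_\Delta > 1/(2\Delta)^3$, hence $|A+B| > |B||A|^2/((2\Delta)^3 |S_t|)$. Substituting $\Delta = |A||B|/(t|S_t|)$ gives $(2\Delta)^3 = 8|A|^3|B|^3/(t^3|S_t|^3)$, and a short simplification collapses the right-hand side to $t^3|S_t|^2/(8|A||B|^2)$. Thus $|A+B| > t^3 |S_t|^2/(8|A||B|^2)$, which rearranges to $|S_t|^2 < 8|A+B|\,|A|\,|B|^2/t^3$.

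Finally I would take square roots to obtain $|S_t| < 2\sqrt{2}\,|A+B|^{1/2}|A|^{1/2}|B|/t^{3/2}$, and since $2\sqrt{2} \approx 2.83 < 3$ this gives the stated bound $|S_t| < 3|A+B|^{1/2}|A|^{1/2}|B|/t^{3/2}$.

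The argument is essentially a one-line substitution into Theorem \ref{delta}, so I do not expect a serious obstacle; the only points requiring a little care are the admissibility check $\Delta \geq 1$ (which guarantees the constant $c_\Delta$ from Theorem \ref{delta} is meaningful) and the bookkeeping of the exponents in the expansion of $(2\Delta)^3$, so that the powers of $|A|$, $|B|$, $t$, and $|S_t|$ combine as claimed. One should also confirm that the strict inequality $c_\Delta > 1/(2\Delta)^3$ propagates to the final strict inequality, which it does.
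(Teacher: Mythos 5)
Your proposal is correct and follows exactly the paper's own argument: the paper likewise sets $\Delta=|A||B|/(t|S_t|)$ so that hypothesis (\ref{core}) is saturated via $\sum_{x\in S_t}r_{A+B}(x)\geq t|S_t|$, applies Theorem \ref{delta}, and solves (\ref{delta_2}) for $|S_t|$. Your additional checks (that $\Delta\geq 1$ and that $2\sqrt{2}<3$ accounts for the stated constant) are sound and merely make explicit what the paper leaves to the reader.
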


\begin{proof}
Set 

\[
\Delta=\frac{|A||B|}{t|S_t|}
\]
so the following inequality holds
\[
\sum_{x\in S_t}r_{A+B}(x)\geq|S_t|t=\frac{|A||B|}{\Delta},
\]
and we can apply Theorem \ref{delta}. Solving inequality (\ref{delta_2}) for $|S_t|$ we get the desired result. 

\qed
\end{proof}

\medskip
Another advantage of using crossing numbers in this context is that there are effective bounds for multigraphs, so we get the following result.

\begin{claim}
Let $A$ and $B$ be finite sets of real numbers with $|A| = k$
and $|B| = \ell.$ If in $A$ any consecutive difference, $a_i-a_{i-1},$ has multiplicity at most $m$
then
\[
|A+B| \geq \frac{ck\sqrt{\ell}}{\sqrt{m}},
\]
where $c>0$ is a universal constant.
\end{claim}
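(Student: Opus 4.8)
The plan is to rerun the crossing-number argument from the proof of Theorem~\ref{main}, the only difference being that the graph $G$ is now a genuine multigraph, so in the lower bound I must replace inequality (\ref{cross}) by a crossing-number estimate that is sensitive to the maximum edge multiplicity.

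First I would build exactly the same geometric graph $G$: the vertices are the points of $A+B$ on the $x$-axis, and for each consecutive pair $(a_i,a_{i+1})$ of $A$ and each $b_j\in B$ I draw the semicircular arc joining $a_i+b_j$ to $a_{i+1}+b_j$. Counted with multiplicity, $G$ has exactly $(k-1)\ell$ edges. The new point is a bound on the edge multiplicity: a pair of endpoints $u<v$ with $v-u=d$ can carry a parallel edge only from a consecutive pair with $a_{i+1}-a_i=d$, and once such an index $i$ is fixed the required translate $b_j=u-a_i$ is determined. Hence the number of parallel edges joining $u$ and $v$ is at most the multiplicity of the difference $d$ in $A$, which is at most $m$ by hypothesis. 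The upper bound on the number of crossings is unchanged: within one translate the $k-1$ arcs lie over adjacent intervals and do not cross, while two distinct translates cross in at most $2k-1$ points (parallel edges arise only from distinct translates and may be drawn without extra crossings), so $cr(G)\le \binom{\ell}{2}(2k-1)\le \ell^2 k$.

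For the lower bound I would invoke the multigraph version of the convex crossing-number inequality: for a multigraph on $n$ vertices with $e$ edges and maximum edge multiplicity $m$ one has $cr(G)\ge c\,e^3/(m n^2)$, valid once $e\ge C m n$. With $e=(k-1)\ell$ and $n=|A+B|$ this gives $cr(G)\ge c(k-1)^3\ell^3/(m|A+B|^2)$. Comparing with the upper bound $\ell^2 k$ and solving for $|A+B|$ yields $|A+B|^2\gg \ell k^2/m$, that is, $|A+B|\ge c k\sqrt{\ell}/\sqrt{m}$, as claimed.

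The main obstacle is that the applicability hypothesis $e\ge C m n$ of the multigraph crossing lemma is not automatic here, since $|A+B|$ can be as large as $k\ell$. I would dispose of the remaining regimes by hand. If $m$ is large relative to $\ell$, say $m\ge c^2\ell$, then the target quantity $ck\sqrt{\ell}/\sqrt{m}$ drops below $k\le|A+B|$ and the inequality holds trivially from $|A+B|\ge|A|=k$. If instead $e<Cmn$, then $|A+B|>(k-1)\ell/(Cm)$, and a short computation shows this already exceeds $ck\sqrt{\ell}/\sqrt{m}$ precisely in the complementary range $\ell\gg m$. Thus the crossing-lemma case together with these two elementary estimates covers all possibilities, and tracking the constants through the three cases produces a single universal $c>0$.
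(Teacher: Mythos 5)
Your proposal is correct and follows essentially the same route as the paper: the same geometric multigraph $G$, the same upper bound $cr(G)\le \ell^2 k$ from (\ref{lower_b}), and the multigraph crossing-number inequality $cr(G)\ge c e^3/(m n^2)$, giving $k\ell^2 \ge c(k\ell)^3/(m|A+B|^2)$. You are merely more careful than the paper in verifying the edge-multiplicity bound and in handling the regimes where the multigraph crossing lemma's hypothesis $e\ge Cmn$ fails, which the paper leaves implicit.
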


\begin{proof}
The claim follows directly from the crossing bound for multigraphs (see e.g. in \cite{Szek}) and from our upper bound in equation (\ref{lower_b}).
\[
k\ell^2\geq cr(G)\geq \frac{c(k\ell)^3}{m|A+B|^2}.
\]

\qed
\end{proof}

\subsection{A small improvement on the crossing number bound}
In the above results we used crossing number bounds, however we could have used a slightly better graph parameter for our purposes. It might be interesting if one would like to improve the 
constant multiplier in the lower bounds. For any graph, $G_n,$ an interval drawing is given by a one-to-one map, $\Phi : V(G_n)\rightarrow \mathbb{R},$ where an edge $(v_i,v_j)\in E(V(G_n)$ maps to the interval $[\Phi(v_i),\Phi(v_j)].$ Two vertex-disjoint edges, $(v_j,v_j)$ and $(v_k,v_\ell),$ are intersecting (under the map $\Phi$) if they share an interior point,
$$[\Phi(v_i),\Phi(v_j)]\cap [\Phi(v_k),\Phi(v_\ell)]\neq \emptyset. $$ 

\begin{definition}
For a given graph $G_n,$ the intersecting number, $int(G_n),$ is the lowest number of edge intersections of an interval drawing of $G_n.$
\end{definition}

Our upper bound in (\ref{lower_b}) holds for the intersection number of $G_n$ and it is clear that 
$$int(G_n)\geq cr(G_n).$$
The inequality is strict if one edge (interval) contains another inside. The maximum number of  crossing-free edges in $G_n$ is about $2n,$ while for intersection-free $G_n$ it is at most $1.5n.$ Using the probabilistic approach of Sz\'ekely (as we will see in the proof of Claim \ref{const}), one can show that if the number of edges, $e,$ is at least $2.25n,$ then 
\[
int(G_n)\geq \frac{0.0658e^3}{n^2},
\]
which is better than the inequality used in equation (\ref{crossing}) since $1/27\approx 0.037 < 0.0658$. As an example we show the following bound:
\begin{claim}\label{const}
Let $A=\{a_1,a_2,\ldots ,a_k\}$ be a finite set of real numbers
with distinct consecutive differences which are not too far from each other, i.e.
\begin{equation}\label{double}
a_i-a_{i-1}\leq 2(a_{j}-a_{j-1})
\end{equation}
for any $1<i,j\leq k,$ and let $B\subset \mathbb{R}$  an arbitrary finite set. Then 
\[
|A+B|\geq \frac{2}{3\sqrt{3}}|A||B|^{1/2}.
\]
\end{claim}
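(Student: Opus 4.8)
The plan is to rerun the proof of Theorem~\ref{main} on the very same graph $G$, but to sandwich its \emph{intersection} number $int(G)\ge cr(G)$ instead of its crossing number, and to read the extra constant out of the near-uniformity hypothesis (\ref{double}). Concretely, I keep the vertices of $G$ at the points of $A+B$ on the $x$-axis and, for each $b\in B$, join $a_i+b$ to $a_{i+1}+b$ by an upper semicircular arc, so that $G$ is $|B|$ translates of the path on $A$, and I view the identity map to $\mathbb{R}$ as the interval drawing $\Phi$. Every arc is then an interval whose length is a single consecutive difference of $A$, so by (\ref{double}) all arc-lengths lie within a factor of two of one another.

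For the upper bound I reuse the merging argument behind (\ref{lower_b}): two translates of the path partition two shifted copies of $[a_1,a_k]$, and the number of vertex-disjoint pairs of arcs sharing an interior point (the crossings together with the few nested pairs that (\ref{double}) still permits) equals one plus the number of breakpoints of the two copies lying in their common overlap, which is at most $2k-1$. Hence $int(G)\le \binom{|B|}{2}(2k-1)\le |B|^2k$, exactly the form of the crossing bound. For the lower bound I carry out Sz\'ekely's probabilistic deletion, the computation advertised just before the claim: keep each vertex independently with probability $p$, so the induced sub-drawing has expectations $pn$ vertices, $p^2e$ edges with $e=|B|(k-1)$, and $p^4int(G)$ intersections; since an intersection-free interval drawing on $m$ vertices carries at most $\tfrac32 m$ edges, deleting one edge per intersection gives $int(G)\ge p^{-4}\bigl(p^2e-\tfrac32 pn\bigr)$, and the choice $p=2.25\,n/e$ (legitimate because $e\ge 2.25\,n$ in the relevant range) yields $int(G)\ge \tfrac{16}{243}\,e^3/n^2$. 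Substituting the upper bound and $e=|B|(k-1)$ and solving for $n=|A+B|$ produces a bound of the form $|A+B|\ge c\,|A||B|^{1/2}$.

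The delicate point is the value of $c$. Feeding the crude inputs above through the two inequalities yields only $c=\tfrac{4}{9\sqrt3}=\tfrac{4\sqrt3}{27}$, i.e.\ the bare $\tfrac43$ gain over Theorem~\ref{main} that the intersection bound buys by itself; reaching the asserted $c=\tfrac{2}{3\sqrt3}$ needs a further factor $\tfrac32$, and this is where (\ref{double}) must be pressed harder. The step I expect to be the real obstacle is to turn ``all arcs of comparable length'' into a quantitatively better input to exactly one of the two inequalities: either sharpen the structural constant $\tfrac32$ for the intersection-free sub-drawings that can actually occur when every interval has length in a fixed factor-two window (the generic $\tfrac32 m$ extremal example mixes lengths $1$ and $2$, and one must exclude such mixing in the sumset-minimizing regime), or, equivalently, show that when $|A+B|$ is near extremal the shifts $b'-b$ are forced to spread out, so that the average number of intersections per pair of translates drops from $2k$ toward $k$. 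Making one of these two improvements precise, and checking that the constants then multiply to exactly $\tfrac{2}{3\sqrt3}$, is the crux of the argument.
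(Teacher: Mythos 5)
Your skeleton coincides with the paper's: the same graph $G$ with its interval drawing, the same upper bound $int(G)\le\binom{|B|}{2}(2k-1)\le|B|^2k$, and Sz\'ekely's probabilistic deletion for the lower bound. But the step you explicitly defer --- turning the hypothesis (\ref{double}) into a better structural constant for intersection-free sub-drawings --- is not a finishing touch; it is the entire content of the proof, and without it your argument only delivers $c=\tfrac{4}{9\sqrt{3}}$, as you compute, not the claimed $\tfrac{2}{3\sqrt{3}}$. So as it stands the proposal proves a weaker statement and leaves the actual claim open.

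The missing observation is your first alternative, in a stronger form than you anticipate: under (\ref{double}) an intersection-free subgraph of this particular drawing on $m$ vertices has at most $m-1$ edges, i.e.\ it is a forest --- the constant improves from $\tfrac32$ all the way to $1$, not merely past $\tfrac23\cdot\tfrac32$. The point is that every edge of $G$ is an interval whose length is one of the (pairwise distinct) consecutive differences of $A$; in an intersection-free drawing a cycle forces some edge-interval to be covered by the concatenation of the intervals of the other edges of the cycle (look at the leftmost and rightmost vertices of the cycle and the two arcs joining them), so some edge length is at least the sum of two other edge lengths, which (\ref{double}) together with the distinctness of the consecutive differences rules out. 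Granting this, one runs the deletion argument with $p=\tfrac{1.5\,n}{e}$ rather than $\tfrac{2.25\,n}{e}$: the surviving drawing has in expectation at least $p^2e-pn=0.5\,pn$ intersections, whence
\[
int(G)\;\ge\;\frac{0.5}{1.5^{3}}\cdot\frac{e^{3}}{n^{2}}\;=\;\frac{4}{27}\cdot\frac{e^{3}}{n^{2}},
\]
and since $\sqrt{4/27}=\tfrac{2}{3\sqrt{3}}$ this gives exactly the stated constant after substituting $e=(k-1)|B|$ and $int(G)\le|A||B|^2$. Your second alternative (forcing the shifts $b'-b$ to spread out in the extremal regime) is not what the paper does and would be much harder to make precise.
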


\begin{proof}
To see this, note that by condition (\ref{double}) if a subgraph of $G_{|A+B|}$ on $n$ vertices is intersection free, then it has at most $n-1$ edges. 
If the number of edges is at least $1.5n$ then there are at least $0.5n$ intersections. The number of edges in $G_{|A+B|}$ is $(k-1)|B|\approx |A||B|,$ 
so let's select a random subgraph of $G_{|A+B|}$ choosing the vertices independently at random with probability 
$p=\frac{1.5|A+B|}{|A||B|}.$ Then the expected number of vertices is $\frac{1.5|A+B|^2}{|A||B|},$ the number of edges is $\frac{(1.5|A+B|)^2}{|A||B|},$ and
the expected number of intersections is at least $p^4\cdot int(G_{|A+B|}).$ From here we have

\[
\left(\frac{1.5|A+B|}{|A||B|}\right)^4\cdot int(G_{|A+B|})\geq 0.5\cdot \frac{1.5|A+B|^2}{|A||B|},
\]
and 
\[
|A||B|^2\geq int(G_{|A+B|})\geq 0.5\cdot \frac{(|A||B|)^3}{1.5^3|A+B|^2}.
\]
\qed
\end{proof}

Note that in the proof we only used the weaker condition that $a_{i+2}-a_i\neq a_j$ for any $1\leq i,j\leq k.$
\section{Constructions}

\subsection{Both $A$ and $B$ have distinct consecutive differences.}

In this section we are going to show constructions which indicate the limitations of possible lower bounds on the size of sumsets forced by 
local conditions, like distinct consecutive differences in a set.

\begin{theorem}
For arbitrary large integer $k,$ there are are sets, $A$ and $B,$ such that in both sets the consecutive differences are distinct, $|A|\approx |B|\geq k$ and $|A+B|\leq c|A||B|^{1/2},$ where $c>0$ is a universal constant.
\end{theorem}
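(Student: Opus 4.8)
The plan is to exhibit an explicit pair $(A,B)$ that matches the lower bound of Theorem \ref{main}, so that the construction is automatically optimal. I would start from the observation that the bound $|A+B|\ge c|A||B|^{1/2}$ is \emph{already} attained, if one ignores the distinctness of the differences of $B$, by a product-type model: let $m$ be large, let $I=\{0,1,\dots,m-1\}$ be an interval, and let $P,Q$ be Sidon sets of size $m$. The planar sets $\mathcal A=I\times P$ and $\mathcal B=I\times Q$ satisfy $\mathcal A+\mathcal B=(I+I)\times(P+Q)$, so that $|\mathcal A|=|\mathcal B|=m^2$ while $|\mathcal A+\mathcal B|\le(2m-1)\,|P||Q|\le 2m^3$. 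Pushing this down to the line by a scale separation $\Phi(x,y)=x+Ny$ with $N$ huge keeps $A=\Phi(\mathcal A)$ and $B=\Phi(\mathcal B)$ of size $m^2$ and forces $|A+B|=|\Phi(\mathcal A+\mathcal B)|\le|\mathcal A+\mathcal B|\le 2m^3=O\!\left(|A||B|^{1/2}\right)$. With $k\approx m^2$ this is exactly the desired bound.

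The entire difficulty is then concentrated in one point: arranging that \emph{both} $A$ and $B$ have pairwise distinct consecutive differences. Here the interval factor $I$, which is responsible for the saving of the factor $\sqrt\ell$ in the sumset, is also the source of repeated gaps: along any column of the grid the order defining $\Phi(\mathcal A)$ produces the same short difference vectors again and again. I therefore plan to break these coincidences by a tie-breaking perturbation at a third, finest scale $\eta\ll 1$, replacing the two model sets by $A=\{\Phi(\mathbf a)+\eta\,\pi(\mathbf a)\}$ and $B=\{\Phi(\mathbf b)+\eta\,\rho(\mathbf b)\}$ for suitable integer labels $\pi,\rho$. If $\eta$ is small enough not to disturb the order coming from $\Phi$, then making every consecutive difference of $A$, and every consecutive difference of $B$, distinct becomes a purely combinatorial (greedy or counting) requirement on the labels.

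The main obstacle is that these two demands pull in opposite directions, and reconciling them is the crux of the proof. A perturbation that kept the sumset collapsing to its minimum would have to satisfy $\pi(\mathbf a)+\rho(\mathbf b)=U(\mathbf a+\mathbf b)$ for some function $U$, i.e. it would have to be affine, and an affine perturbation separates no repeated gap; hence the perturbation is forced to be non-affine, and the real work is to show that one can nevertheless keep $|A+B|=O(m^3)$. The point I would exploit is that we need not keep the sumset as small as $m^2$: we already sit at $m^3$ and may spend a further bounded factor. Concretely, I would let the perturbation labels themselves live on the Sidon coordinate, so that $\eta(\pi(\mathbf a)+\rho(\mathbf b))$ ranges over a set of size $O(m^3)$ absorbed into the $(P+Q)$-direction; this keeps $|A+B|=O(m^3)$ while the spread of the labels yields distinct consecutive differences in both sets. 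An alternative, possibly cleaner, route is a self-similar construction: since by Theorem \ref{main} any distinct-difference set already has $|L+M|\ge c\,m^{3/2}$ for its size-$m$ building blocks, the exponent $3/2$ is reproduced exactly under a two-scale (tensor) product, and one has only to control the multiplicative constant and re-establish distinctness of the gaps across scales, again by a scale-separated tie-breaking. Verifying that this tie-breaking can be carried out simultaneously for $A$ and for $B$ without exceeding the $O(m^3)$ bound is the step I expect to require the most care.
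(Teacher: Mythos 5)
Your proposal correctly sets up the product/grid model and, importantly, you yourself identify the fatal tension in it: a perturbation that preserves the collapse of the sumset must satisfy $\pi(\mathbf a)+\rho(\mathbf b)=U(\mathbf a+\mathbf b)$, hence be affine, and an affine perturbation separates no repeated gap. But you then do not resolve this tension; you only gesture at a resolution, and the gesture does not work as described. If the labels $\pi,\rho$ ``live on the Sidon coordinate,'' i.e.\ depend only on the $P$- (resp.\ $Q$-) component, then within a fixed column $\{x+Np:\ x\in I\}$ the perturbation is constant and every consecutive difference of $A$ inside that column is still $1$; nothing is fixed. If instead the labels depend on the interval coordinate, then a typical element of $(I+I)\times(P+Q)$ has about $m$ representations, each of which can receive a different value of $\pi(\mathbf a)+\rho(\mathbf b)$, and since $\eta$ sits at a scale below everything else these values are not merged: the sumset can grow by a factor up to $m$, destroying the $O(m^3)$ bound. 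The budget of ``a further bounded factor'' that you invoke is not available here --- the loss is polynomial, not bounded. So the crux of the theorem (simultaneous distinctness of consecutive differences in \emph{both} sets while keeping $|A+B|=O(|A||B|^{1/2})$) remains unproved in your write-up, and your own affineness observation is essentially an argument that this particular perturbative scheme cannot succeed.

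For comparison, the paper escapes the tension by an arithmetic rather than perturbative device. Take four pairwise coprime integers $a<b<c<d$ close together (e.g.\ $6t+1,6t+2,6t+3,6t+5$) and set $n=ab$, $k=cd$, $m=ac$, $r=bd$. The set $A$ interleaves the multiples of $k$ below $kn/2$ with one multiple of $n$ between each consecutive pair; consecutive differences are of the form $\pm(jk-in)$ and coprimality of $n$ and $k$ forces them to be distinct --- this is what replaces your tie-breaking step, and it costs nothing in the sumset. The set $B$ is built the same way from $m$ and $r$. Every element of $A+B$ is then divisible by one of $a,b,c,d$ and lies in $[0,abcd)$, so $|A+B|\le 4bcd\approx 4(a^2)^{3/2}$ while $|A|,|B|\gtrsim a^2$. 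If you want to salvage your approach you would need a mechanism of this kind --- one in which the distinctness of gaps is wired into the arithmetic of the construction rather than added afterwards at a finer scale.
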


\begin{proof}
Let $n$ and $k$ be relative prime numbers, $1 < n < k,$ such that $k-n$ is small. (we will assign their exact values later) Set $A$ is defined as

$$A=\{jk\ |\ 0 \leq j < n/2\}.$$ 

Between two consecutive numbers of $A$ there is at least one multiple of  $n.$ Let us choose one of them and add it to $A.$ In this way we have  $n-1$ numbers, all below $kn/2.$
All consecutive differences are distinct since the difference is either of the form $jk-in$ or
$in-jk.$  If $jk-in = j'k-i'n,$ then $(j-j')k = (i -i')n,$ so $n \mid j- j',$
$j = j'$ and then $i = i'.$
The same holds for the  $in- jk = i'n -j'k$ case.
If $jk - in = i'n - j'k,$ then $(j + j')k = (i + i')n,$ so $n \mid j + j', j + j' < n,$
so $j = j' = 0,$ which is not possible.

The construction of $B$ follows the same algorithm, using numbers $m, r.$

$$B=\{jr\ |\ 0 \leq j < m/2\}.$$

We choose $m, r$ in a way that $A + B$ is small.
To achieve this let $1 < a < b < c < d$ be four pairwise relative prime numbers close to each other.
e.g. $a = 6t+1, b = 6t+2, c = 6t+3, d = 6t+5.$ Let $n = ab, k = cd, m = ac, r = bd.$
The elements in both sets are less than  $abcd/2,$ so the sumset is subset of $[0, abcd).$  All elements of the sumset are divisible by one of the numbers $ a, b, c, d,$  so its cardinality is less than $4bcd.$ The cardinality of the sets is at least $a^2$ and 
\[
4bcd<(4+\varepsilon)(a^2)^{3/2}.
\]
\qed
\end{proof}

\subsection{Bounding the size of $A+A.$}

The best known lower bound on $|A+A|,$ where $A$ has distinct consecutive differences follows from Theorem \ref{main}. One can use the structure of the graph $G$ defined by $A+A$ to improve the constant multiplier in Theorem \ref{main}, but it is still $|A+A|=\Omega(|A|^{3/2}).$ In this direction the best result is due to Schoen \cite{Sch} who proved a better bound if $A$ is a $tdcd$ set. Schoen calls a a set a $tdcd$-set (totally distinct consecutive differences) if for every fixed $1\leq d < |A|,$ all differences
$a_i-a_{i-d},$ where $d < i < n,$ are distinct. For $tdcd$ sets Schoen proved that there is a constant $c>0$ such that $|A+A|\geq c|A|^{3/2+c}.$

The next construction shows that there are sets $A$ with distinct consecutive differences such that $|A+A|\leq c|A|^{2-c}.$ 

\begin{theorem}
There is a constant, $c>0.1,$ such that for arbitrarily large $n$ there is a set $A$ with distinct consecutive differences, $|A|\geq n,$ such that $|A+A|=O\left(|A|^{2-c}\right).$
\end{theorem}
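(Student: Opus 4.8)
The plan is to exhibit one explicit family, since the claim only asks for arbitrarily large examples with a fixed exponent. The guiding principle is the familiar tension behind all such constructions: a set with a small sumset is additively structured, essentially a generalized arithmetic progression, whereas the distinct consecutive differences (DCD) condition forbids any two equal gaps and so rules out exactly the local arithmetic structure that makes a sumset small. I would therefore start from a set whose sumset is essentially linear in its size and perturb it as little as possible to separate all consecutive gaps, then bound the unavoidable growth of the sumset. Note that a genuinely product/digit-type construction cannot be used directly: any base-$q$ or Cartesian structure has a lowest place whose increments repeat, which destroys DCD, so the perturbation must be built in from the start.

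Concretely, I would assemble $A$ out of $t$ blocks, the $j$-th block being a dilated copy $\lambda_j\cdot X$ of one fixed pattern $X$ of constant size $w$, translated by an offset $C_j$, so that $A=\bigcup_{j=1}^{t}(\lambda_j X+C_j)$ and $|A|=wt$. The pattern $X$ is fixed once and for all as a small DCD set, and the dilations $\lambda_j$ are drawn from a multiplicatively structured set, for instance powers of a prime exceeding every gap of $X$. Three things must then be verified: (i) the offsets $C_j$ can be chosen so the blocks interleave into a single increasing sequence; (ii) every consecutive gap is distinct, since within a block the gaps are $\lambda_j$ times the distinct gaps of $X$, and unique factorization of each product $\lambda_j\cdot g$ prevents repetition across blocks; and (iii) the sumset $A+A=\bigcup_{j\le j'}(\lambda_j X+\lambda_{j'}X+C_j+C_{j'})$ is small. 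For (iii) I would take the offsets roughly linear, $C_j\approx cj$, so that $C_j+C_{j'}$ is constant on each antidiagonal $j+j'=s$ and the many pairs with a common index-sum collapse onto overlapping ranges; the contribution of a fixed $s$ is then governed by $\bigl|\bigcup_{j+j'=s}(\lambda_jX+\lambda_{j'}X)\bigr|$, whose size depends on how few distinct ratios $\lambda_{j'}/\lambda_j$ occur and on the smallness of $X+\mu X$ for those ratios $\mu$. I would finish by optimizing the free parameters $w$, $t$, and the growth rate of the $\lambda_j$, balancing the diagonal contribution (of order $t\,|X+X|$) against the off-diagonal one, and checking that the resulting exponent is $2-c$ with $c>0.1$.

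The step I expect to be genuinely delicate is (iii) together with the final optimization, because the tension is irreducible: making the dilations incommensurate is precisely what forces the consecutive gaps apart, as required for DCD by (ii), yet it is also what inflates each cross-block sumset $\lambda_j X+\lambda_{j'}X$ toward its maximal size $w^2$. Quantifying the least inflation compatible with all gaps being distinct, equivalently bounding $\sum_{s}\bigl|\bigcup_{j+j'=s}(\lambda_jX+\lambda_{j'}X)\bigr|$ from above, is where the exponent is won or lost, and it is exactly this quantity that must be driven below $|A|^{1.9}$. An alternative I would keep in reserve, parallel to the construction used for the two-set theorem, is a purely divisibility-based design in which $A+A$ is forced into a union of few residue classes modulo several pairwise coprime moduli; the obstacle there is the ``mixed'' sums, which for a single set are not automatically covered by a common factor, so some loss beyond the exponent $3/2$ is expected and the same numerical check $c>0.1$ would be required.
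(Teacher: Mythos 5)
Your proposal correctly isolates the crux --- you need a perturbation of an additively structured set that separates all consecutive gaps while keeping the pairwise sums of the perturbations few --- but the specific mechanism you choose cannot deliver it, and you stop exactly at the point where the construction has to be made to work. With dilations $\lambda_j$ that are powers of a prime (or any essentially incommensurate choice), each cross-block sumset $\lambda_j X+\lambda_{j'}X$ has full size $w^2$, and for distinct pairs $(j,j')$ these sets live at genuinely different scales, so the union over an antidiagonal $j+j'=s$ has size of order $tw^2$ rather than collapsing; summing over $s$ gives $\Theta(t^2w^2)=\Theta(|A|^2)$, i.e.\ no saving at all. The antidiagonal trick only controls the offset part $C_j+C_{j'}$, which is the analogue of one of the two savings needed; the other saving, that the perturbations themselves have a small sumset, is destroyed by the very dilations you introduce to get distinct consecutive differences. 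You flag this as ``where the exponent is won or lost'' but do not win it, and on this route it cannot be won.

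The idea you are missing is, ironically, the one you explicitly rule out at the start: the paper's construction \emph{is} a digit/Cartesian-product construction. It starts from a $7$-element Sidon set $S=\{0,1,3,7,12,22,30\}$, which has $43$ differences but only $28$ sums, takes the $k$-fold product encoded in base $100$ (so that $b_i+b_j$ lands in $(S+S)^k$, only $28^k$ values, with no carries), and sets $a_i=b_i+i\cdot 100^k$ so that $i+j$ contributes only $2\cdot 43^k$ values; hence $|A+A|\le 2\cdot 43^k\cdot 28^k$ while $|A|=43^k$, giving $c=\log(43/28)/\log 43\approx 0.114$. Your objection that a product structure has a ``lowest place whose increments repeat'' is overcome not by perturbing the values but by \emph{ordering} them: the sequence of vectors is built recursively from an Euler tour of the complete difference digraph on $S$, which visits every difference of $S$ exactly once per coordinate and yields, by induction on the blocks, a traversal of length $43^k$ in which all consecutive (vector) differences are distinct. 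So the two requirements are decoupled: smallness of the sumset comes from the sum/difference disparity of a Sidon set raised to a tensor power, and the DCD property comes purely from the choice of ordering. Without some substitute for both of these ingredients, your blocks-with-dilations design does not prove the theorem.
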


\begin{proof}

For the construction of $A$ we are going to use a set of integers with larger difference set than sumset. Searching for sets with many more differences than sums we selected  the set  $S=\{0,1,3,7,12,22,30\}.$
$S$ has 43 distinct pairwise differences and 28 sums. 

If the reader would like to follow the construction with a smaller set then one can perform all steps using the $S=\{0,1,3\}$ set, and then $c>0$ is a bit below 0.1 ($ c\approx 0.92 $), but the steps are easier to check. The selected seven element set is a result of a simple optimization to maximize $c.$ We are going to revisit the selection of $S$ at the end of the proof.

First we construct a set of $k$-dimensional vectors, $Q_k,$ in a sequence that consecutive vectors have distinct (vector) differences.
Let us consider $S$ as the vertex set of a complete digraph on 7 vertices, and assign a value to every edge as follows: If the edge is $v_i\rightarrow v_j$ then its value is defined as $w(i,j)=v_i-v_j.$ For example, $w(2,4)=-6,$ and $w(3,3)=0.$ We will consider
walks in this digraph. The first walk is an Euler tour starting and ending in $v_1.$ 
Listing the indices of the vertices in sequence as the tour goes we have e.g.

$$E=\{1,3,5,2,6,4,7,2,4,1,5,7,3,6,1,2,3,4,5,6,7,1,7,5,3,7,4,6,5,1,6,2,5,4,3,1,4,2,7,6,3,2,1\}.$$

Using the values assigned to the vertices we have our first multiset of (one dimensional) vectors, 

{\tiny $$Q_1=\{0,3,12,1,22,7,30,1,7,0,12,30,3,22,0,1,3,7,12,22,30,0,30,12,3,30,7,22,12,0,22,1,12,7,3,0,7,1,30,22,3,1,0\}.$$} 
By the construction, all consecutive differences are distinct, we used all differences but 0. We define $Q_{k+1}$ recursively, using $Q_k.$ There will be $43|Q_k|=43^{k+1},$ not necessarily distinct $k+1$ dimensional vectors in $Q_{k+1},$ such that all consecutive differences are distinct.
The first $k$ coordinates are periodically repeating vectors, lets take $k$ copies, blocks, of $Q_k,$ one after the other. We choose the last, $(k+1)$-th coordinate for every vector in blocks using $Q_1$ as follows.
In the first block all $(k+1)$-th coordinates are 0. In the second block we alternate 3 and 0, as $3,0,3,0,\ldots,0,3.$ The third is $12,3,12,3,\ldots,3,12.$ For the $i$-th block, $(i>1),$ we use the $i$-th and $i-1$-th entries of $Q_1$ and alternate them starting (and ending) with the $i$-th. Note that the first and last vectors are identical.

In this way all consecutive vectors have distinct differences. If the differences in the last coordinate are the same in two pairs of consecutive vectors, then there are three cases. 
\begin{itemize}
    \item  First, the two pairs are selected from the same $Q_k$ block. In this case the differences of the first $k$ coordinates are distinct by induction. 

\item The second case when one of the pairs is between two blocks. There is only one pair of blocks where the last and the first vectors have a given difference in the last coordinate, so the other pair is inside one block. But in the difference vector of pairs between blocks the first $k$ coordinates are zero, while differences of consecutive vectors in the same block have some nonzero coordinates among the first $k.$ 

\item The third case is when one pair is in one block and the other is in another block. It is only possible if one pair is in a block with last coordinates $a,b,a,\ldots,b,a$ and the other is in the block with $b,a,\ldots,a,b,$ but in this case the positions of the two pairs relative to their blocks are different due to parity, so by induction the differences are distinct in the first $k$ coordinates.
\end{itemize}

\begin{figure}[ht]\label{blocks}
\centering
\includegraphics[scale=.4]{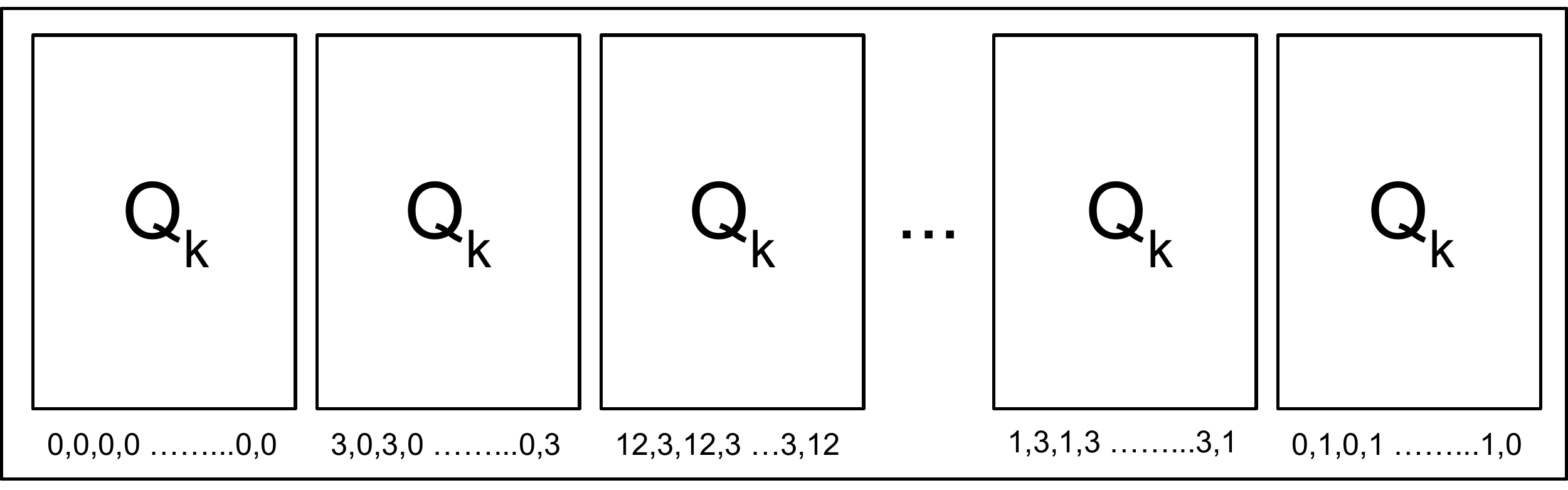}
\caption{The recursion getting $Q_{k+1}$ from $Q_k.$}
\end{figure}

Our next step is to construct an increasing sequence of numbers using the vectors in $Q_k$ keeping the same order and keeping the property that consecutive elements have distinct differences.
If $\Vec{v_i}\in Q_k$ has coordinates $\Vec{v_i}^T=[\nu_1,\nu_2,\ldots,\nu_k],$ then let us define $b_i=\nu_1+100\nu_2+100^2\nu_3+\ldots+100^{k-1}\nu_k.$ With this definition $b_i\leq 100^k,$ and the consecutive differences (in the order of the vectors in $Q_k$) are all distinct. To make the sequence monotone increasing, we define 

\[A=\left\{a_i \text{  }|\text{  }  a_i=b_i+i100^k, 1\leq i\leq 43^k\right\}.\]

The sumset, $A+A,$ consists of sums $a_i+a_j=b_i+b_j+(i+j)100^k.$ From the selection of the initial set, 
$S,$ we see that $\left|\left\{b_i+b_j | 1\leq i,j\leq 43^k\right\}\right|\leq 28^k,$
so we have the bound $|A+A|\leq 28^k\cdot2\cdot43^k$ while $|A|=43^k.$ By this construction we get a set $A,$ with distinct consecutive differences and $|A+A|\leq 2|A|^{2-c}$ where 
\[
c= \frac{\log{\frac{43}{28}}}{\log{43}}\approx 0.11406.
\]

From the construction we see that we needed a set $S$ with small sumset and large difference set. We were searching among Sidon sets, i.e. sets where all pairwise sums are distinct. In such sets, if the size of $S$ is $x$ then $|S+S|=\binom{x}{2}+x$ and $|S-S|=x(x-1)+1.$ As we are looking for a large $c$ above, we want to find the maximum of the function

\[
\frac{\log{\frac{x(x-1)+1}{\binom{x}{2}+x}}}{\log{x(x-1)+1}},
\]

for positive $x.$ The maximum is $0.114058\ldots$ when $x \approx 6.99618,$  so we selected $S$ to be a 7-element Sidon set, $\{0,1,3,7,12,22,30\}.$

\qed

\end{proof}

\section{Acknowledgements}
We would like to thank Endre Szemer\'edi and Ilya Shkredov for helpful discussions. We appreciate useful comments from the anonymous referee.
Research was supported in part by the  OTKA K 119528 and K129335 grants. The second author was supported by NSERC Discovery and NKFI KKP 133819 grants.


\begin{thebibliography}{}

\bibitem{ACNSz}
M. Ajtai, V. Chv\'atal, M. Newborn, and E. Szemer\'edi, Crossing-free subgraphs, 
{\em Ann. Discrete Math.}, 12 (1982), 9--12.


\bibitem{ENR}
Elekes, M.~Nathanson, and I,~Ruzsa, Convexity and sumsets. 
{\em J. Number Theory} 83 (2000), no. 2, 194--201.

\bibitem{HNR}
B. Hanson, O. Roche--Newton, and M. Rudnev,
Higher convexity and iterated sum sets,
	arXiv:2005.00125 [math.NT]  (2020)



\bibitem{HE}
N.~Hegyv\'ari, On consecutive sums in sequences. 
{\em Acta Math. Hungar.} 48 (1986), no. 1-2, 193--200.


\bibitem{LiRo}
L. Li and O. Roche-Newton
Convexity and a sum-product type estimate,
{\em Acta Arith.} 156 (2012), 247--255.

\bibitem{PST}
J. Pach, J. Solymosi, and G. Tardos,
Crossing numbers of imbalanced graphs,
{\em J. Graph Theory} Volume 64, Issue 1 (2010)
12--21.

\bibitem{RS}
M.Rudnev and S. Stevens, 
An update on the sum-product problem, 
arXiv:2005.11145
(2020).

\bibitem{RSS}
I. Ruzsa, G. Shakan, J. Solymosi,and  E. Szemer\'edi,
Distinct consecutive differences,
in: Springer volume: Combinatorial and Additive Number Theory IV (2021)
	arXiv:1910.02159 [math.CO]


\bibitem{SchSh}
T. Schoen I. and Shkredov,  On Sumsets of Convex Sets. 
{\em Combin. Probab. Comput.}, (2011), 20(5), 793--798.

\bibitem{Sch}
T. Schoen,
On Convolutions of Convex Sets and Related Problems,
{\em Canad. Math. Bull.} Vol. 57 (4), 2014 pp. 877--883.

\bibitem{SSSZV1}
F. Shahrokhi, O. S\'ykora, L. A. Sz\'ekely, and I. Vr{t}'o,
Bounds for convex crossing numbers. 
in: Computing and combinatorics, volume 2697 of Lecture
Notes in Comput. Sci., pages 487--495. Springer, Berlin, 2003. 

\bibitem{Sh}
I. Shkredov,
On sums of Szemer\'edi--Trotter sets, 
{\em Proc. Steklov Inst. Math.}, 289 (2015), 300--309 

\bibitem{ISh}
I. Shkredov, Personal communication (2020 august)


\bibitem{SW}
S. Stevens and A. Warren,
On sum sets of convex functions,
	arXiv:2102.05446 [math.CO] (2021)


\bibitem{Szek}
L. Sz\'ekely, Crossing Numbers and Hard Erd\H{o}s Problems in Discrete Geometry. 
Combinatorics, 
{\em Combin. Probab. Comput.},(1997).  6(3), 353--358. 

\end{thebibliography}
\end{document}